\documentclass[12pt]{amsart}
\usepackage{amsmath}
\usepackage{amsfonts}
\usepackage{amsthm}
\usepackage{amssymb}
\usepackage{tikz-cd}
\usepackage{graphics}
\usepackage{array}
\usepackage{dsfont}
\usepackage{color}
\usepackage{wasysym}
\usepackage{hyperref}
\usepackage{pdfsync}
\usepackage{mathrsfs}

\usepackage[alphabetic]{amsrefs}
\usepackage{geometry}
\usepackage{bbm}
\usepackage[all,cmtip]{xy}
\usepackage{tikz}\usetikzlibrary{intersections}

\newcommand{\N}{{\mathbb N}}
\newcommand{\Z}{{\mathbb Z}}
\newcommand{\C}{{\mathbb C}}
\newcommand{\Q}{{\mathbb Q}}

\DeclareMathOperator{\Pic}{Pic}

\DeclareMathOperator{\codim}{codim}

\DeclareMathOperator{\GV}{GV}
\DeclareMathOperator{\IT}{IT_0}

\theoremstyle{plain}

\newtheorem{thm}{Theorem}[section]

\newtheorem{coro}[thm]{Corollary}

\newtheorem*{Generic vanishing conjecture}{Generic vanishing conjecture}

\theoremstyle{definition}

\newtheorem{defi}[thm]{Definition}

\newtheorem*{ac}{Acknowledgements}

\newcommand{\cO}{\mathcal{O}}

\newcommand{\cI}{\mathcal{I}}

\numberwithin{equation}{section}

\begin{document}

\title{Generic vanishing conjecture for divisors}
	
	\author{Fanjun Meng}
    \address{Department of Mathematics, University of California San Diego, 9500 Gilman Drive \# 0112, La Jolla, CA 92093-0112, USA}
    \email{f2meng@ucsd.edu}

	\thanks{2020 \emph{Mathematics Subject Classification}: 14F17, 14K05.\newline
		\indent \emph{Keywords}: abelian varieties, generic vanishing, positivity.}

\begin{abstract}
Generic vanishing conjecture proposed by Pareschi and Popa is about the relationship between the existence of subvarieties of codimension $>1$ of principally polarized abelian varieties representing minimal cohomology class and the positivity of their ideal sheaves twisted by the principal polarization. Inspired by their formulation, we prove a codimension $1$ analog of generic vanishing conjecture.
\end{abstract}

	\maketitle
	 \tableofcontents

\section{Introduction}

We work over the field of complex numbers $\C$.

In \cite{PP08}, Pareschi and Popa propose the following conjecture regarding the relationship between the existence of subvarieties of codimension $>1$ of principally polarized abelian varieties representing minimal cohomology class and the positivity of their ideal sheaves twisted by the principal polarization, in analogy with the well-known equivalence between a subvariety in projective space being of minimal degree and its ideal sheaf being Castelnuovo--Mumford $2$-regular.

\begin{Generic vanishing conjecture}[\cite{PP08}*{Conjecture A}]
    Let $(A, H)$ be an indecomposable principally polarized abelian variety (ppav) of dimension $g$ and $X$ a geometrically nondegenerate closed reduced subscheme of $A$ with ideal sheaf $\cI_X$ of pure dimension $1\le d\le g-2$. The following are equivalent:
    \begin{enumerate}
         \item $X$ has minimal cohomology class, i.e. $[X]=\frac{H^{g-d}}{(g-d)!}$.
        \item $\cI_X(H)$ is a $\GV$-sheaf.
        \item $\cI_X(2H)$ satisfies $\IT$.
        \item $\cO_X(H)$ is M-regular, and $\chi(X, \cO_X(H))=1$.
        \item Either $(A, H)$ is the polarized Jacobian of a smooth projective curve of genus $g$ and $X$ is $+$ or $-$ an Abel-Jacobi embedded copy of $W_d(C)$, or $g=5$, $d=2$, $(A, H)$ is the intermediate Jacobian of a smooth cubic threefold and $X$ is $+$ or $-$ a translate of the Fano surface of lines.
    \end{enumerate}
\end{Generic vanishing conjecture}

Note that for a ppav $(A, H)$, we use $H$ to denote the theta divisor of itself. $\GV$-sheaves are sheaves satisfying an analogous condition to the famous generic vanishing result for canonical bundles by Green and Lazarsfeld \cites{GL87, GL91}. M-regular sheaves are defined by Pareschi and Popa in \cite{PP03}, and they lead to many important applications to the theory of abelian varieties and beyond. Both notions are about positivity of sheaves on abelian varieties and are useful in this setting, see Definition \ref{GVM} for details.

The equivalence between $(1)$ and $(5)$ is conjectured in low dimension by Beauville \cite{Bea82} and Ran \cite{Ran81} and in general by Debarre \cite{Deb95}. $(2)\Leftrightarrow(4)\Rightarrow(3)$ are known and straightforward. However, as stated in \cite{PP08}, it is not known that $(3)$ implies $(2)$ or $(4)$. It is known that $(5)\Rightarrow(4)$ for $W_d$ in a Jacobian by \cite{PP03} and for Fano surface of lines of a smooth cubic threefold by \cite{Hor07}. In particular, $(5)$ is known to imply $(1)$, $(2)$, $(3)$ and $(4)$. It is known that $(2)\Rightarrow(1)$ by \cite{PP08}. The equivalence of $(1)$, $(2)$, $(4)$ and $(5)$ is known when $d=1$ by Matsusaka--Ran criterion and \cite{PP08} and when $g=4$ by \cites{Ran81, PP08}. Moreover, $(2)\Rightarrow(5)$ is additionally known when $d=g-2$ by \cite{PP08} and when $g=5$ by \cite{CMPS18}.

In this paper, we aim to prove the codimension $1$ analog of generic vanishing conjecture. That is, we consider a divisor $D>0$ (not necessarily reduced or irreducible), i.e. an effective divisor $D\neq0$, on the ppav $(A, H)$ of arbitrary dimension, and show that $(1)$, $(2)$, $(3)$ and $(4)$ are equivalent in this setting. Note that $(5)$ is trivially false in this setting since there are indecomposable principally polarized abelian varieties which are not Jacobians or intermediate Jacobians and we can choose $D$ to be $H$. Thus we do not consider $(5)$ in the following.

\begin{thm}\label{Generic vanishing conjecture in codim 1}
     Let $(A, H)$ be an indecomposable principally polarized abelian variety and $D>0$ a divisor on $A$. The following are equivalent:
    \begin{enumerate}
        \item $D$ is a translate of $H$.
        \item $\cI_D(H)$ is a $\GV$-sheaf.
        \item $\cI_D(2H)$ satisfies $\IT$.
        \item $\cO_D(H)$ is M-regular, and $\chi(D, \cO_D(H))=1$.
        \end{enumerate}
\end{thm}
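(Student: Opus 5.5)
Since $\cI_D=\cO_A(-D)$, the sheaves $\cI_D(H)=\cO_A(H-D)$ and $\cI_D(2H)=\cO_A(2H-D)$ are line bundles, so every condition reduces to positivity of a line bundle or to a cohomology computation. I will prove $(1)\Rightarrow(2),(3),(4)$ directly, and then $(2)\Rightarrow(1)$, $(3)\Rightarrow(1)$, $(4)\Rightarrow(1)$.

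\emph{Direction $(1)\Rightarrow$ the rest.} If $D=t_x^*H$, then $H-D$ is algebraically equivalent to $0$, so $\cO_A(H-D)\in\Pic^0(A)$. A nontrivial element of $\Pic^0(A)$ has no cohomology, and twisting by $\alpha\in\Pic^0$ only moves the one bad point, so $\cO_A(H-D)$ is $\GV$. Next, $2H-D\equiv H$ is ample, so every twist has vanishing higher cohomology by Kodaira/Mumford; this gives $\IT$. For $(4)$, use $0\to\cO_A(H-D)\to\cO_A(H)\to\cO_D(H)\to0$, twisted by $\alpha\in\Pic^0$. We have $h^i(\cO_A(H)\otimes\alpha)=0$ for $i>0$, and $\cO_A(H-D)\otimes\alpha\in\Pic^0$ has nonzero cohomology only at one point $\alpha_0$. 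Hence $V^i(\cO_D(H))\subseteq\{\alpha_0\}$ for $i\ge1$, which has codimension $g\ge i+1$ when $g\ge2$. (The case $g=1$ needs a separate remark, or follows from $\dim A\ge 1$; I will check it.) Finally $\chi(\cO_D(H))=\chi(H)-\chi(H-D)=1-0=1$.

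\emph{Direction $(2)$ or $(3)\Rightarrow(1)$.} Set $L=\cO_A(H-D)$. A $\GV$ line bundle is nef: via Fourier--Mukai, $\GV$ is preserved under the relevant operations, and a line bundle with nonnef class has a twist with cohomology jumping in small codimension. The cleanest route is that $\GV$ implies $\chi(L)\ge0$ and also $\GV$ for pullbacks by isogenies; alternatively one uses the known fact that $\GV$ sheaves are nef (Pareschi--Popa / Debarre). So $H-D$ is nef. Then $D\le H$ numerically with both nef, and $H^{g-1}\cdot D\le H^g=g!$, while $D$ is effective and nonzero. Since $H-D$ is nef, $D$ is nef (it is effective on an abelian variety), and both $H-D$ and $D$ are nef. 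By indecomposability of $(A,H)$, the decomposition $H\equiv D+(H-D)$ into nef classes forces one of them to be $0$. The argument: the nef cone relation gives $h^0$ considerations, or more concretely one uses the fact that a principal polarization is not a sum of two nonzero nef classes unless it decomposes (e.g.\ via $H^{g}=\sum\binom{g}{k}D^k(H-D)^{g-k}$ and Hodge index). Since $D\ne0$, we get $H-D\equiv0$, so $D$ is algebraically equivalent to $H$, hence $h^0(\cO(D))=1$ and $D$ is a translate of $H$. For $(3)$: $\IT$ for $\cO(2H-D)$ means the line bundle is ample (an $\IT$ line bundle on an abelian variety is ample, because $\chi>0$ and it must be nondegenerate with index $0$). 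Mukai's theory then shows that $\cO(H-D)$ is $\GV$, and I reduce to the previous case; alternatively one runs a similar nef-splitting argument using $H-D+H$ ample.

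\emph{Direction $(4)\Rightarrow(1)$.} $\chi(\cO_D(H))=1$ gives $\chi(H-D)=0$, i.e.\ $(H-D)^g=0$. M-regularity of $\cO_D(H)$, combined with the long exact sequence, forces $V^i(\cO_A(H-D))$ to have codimension $\ge i$, i.e.\ $\GV$, from $H^{i}(\cO_D(H)\otimes\alpha)\to H^{i+1}(L\otimes\alpha)\to0$ for $i\ge 1$. I expect the main obstacle to be the step "nef splitting forces triviality" in the indecomposable case. The first approach I will try: $H-D$ nef with $(H-D)^g=0$ and $D$ effective. The kernel $K$ of $\phi_{H-D}$ is positive-dimensional, and the connected component $B$ of $K$ gives a quotient on which $H-D$ descends to an ample class. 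Using Poincaré reducibility, $H$ restricted to $B$ equals $D|_B$. If $B\neq A$, this would yield a splitting of $(A,H)$, contradicting indecomposability. Hence $B=A$, so $H-D\equiv0$.
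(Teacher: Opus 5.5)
Your proof of $(1)\Rightarrow(2),(3),(4)$ is fine, but the implication that carries the theorem, $(3)\Rightarrow(2)$, is not proved. The sentence ``Mukai's theory then shows that $\cO_A(H-D)$ is $\GV$'' has no argument behind it, and neither does the ``nef-splitting argument using $H-D+H$ ample''. This implication is the content of Theorem~\ref{2-1>0}, and it cannot come from formal Fourier--Mukai considerations. Indeed, ``$2H-D$ ample $\Rightarrow$ $H-D$ nef'' is false for $\Q$-divisors (take $D=\tfrac{3}{2}H$). It is also false for non-nef integral $D$ (take $D=2H-H'$ with $H'$ a principal polarization not numerically equivalent to $H$). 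So any proof must use that $D$ is both integral and nef.

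The paper does this arithmetically:
\begin{enumerate}
\item $P(t)=(tH-D)^g/g!$ satisfies $P(n)^2=\deg(n_A-\phi_H^{-1}\circ\phi_D)$, so $P$ is a monic integer polynomial by Gauss's lemma.
\item By Mumford its roots are real, and nefness of $D$ and ampleness of $2H-D$ put them in $[0,2)$.
\item Write $P=t^a(t-1)^bR$ with $R(0)R(1)\neq0$. The nonzero integer $R(1)$ would have absolute value $<1$ unless $R=1$, so $P(t)=t^r(t-1)^{g-r}$.
\item Consequently $M_n=(n+1)H-nD$ has no higher cohomology and $\chi(\cO_A(M_n))=(n+1)^r>0$. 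So $M_n$ is effective, hence nef, and letting $n\to\infty$ in $M_n/n=H-D+H/n$ shows $H-D$ is nef.
\end{enumerate}

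Your $(2)\Rightarrow(1)$ and $(4)\Rightarrow(1)$ rest on the step ``$H\equiv D+(H-D)$ with both summands nef forces one of them to vanish'', which is also unproved. It is true for integral classes, but your sketch never uses integrality, and without it the conclusion fails. Knowing $H|_B\equiv D|_B$ on $B=(\ker\phi_{H-D})^0$ does not force $(A,H)$ to split. Concretely, let $B$ be a proper nonzero abelian subvariety of a non-simple indecomposable ppav, and let $\varepsilon_B\in\mathrm{End}(A)\otimes\Q$ be the symmetric idempotent of $B$. The effective $\Q$-class $D_\Q$ with $\phi_{D_\Q}=\phi_H\circ\varepsilon_B$ has every property you use, yet $D_\Q\not\equiv H$. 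In $(2)\Rightarrow(1)$ you also never established $(H-D)^g=0$, which you need for $B\neq0$.

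The paper bypasses all of this. A nonzero $\GV$ sheaf has $V^0\neq\emptyset$ (Lemma~7.4 of \cite{HPS18}), so $D+E\in|T|$ for some translate $T$ of $H$ and some effective $E$. Since $h^0(\cO_A(T))=1$ and $T$ is reduced and irreducible by indecomposability, $D>0$ forces $D=T$.

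Finally, your passage from M-regularity of $\cO_D(H)$ to $\GV$ of $\cO_A(H-D)$ only controls $V^i$ for $i\ge2$. The case $i=1$ needs the extra argument of Lemma~3.3 in \cite{PP08}.
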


As pointed in \cite{PP08}, whether $(3)$ implies $(2)$ or $(4)$ in the original generic vanishing conjecture is open. In Theorem \ref{Generic vanishing conjecture in codim 1}, the main task is to show that $(3)$ implies $(2)$, which follows from the following stronger theorem.

\begin{thm}\label{2-1>0}
    Let $(A, H)$ be a principally polarized abelian variety and $D$ a nef divisor on $A$. If $\cO_A(2H-D)$ satisfies $\IT$, then $\cO_A(H-D)$ is a $\GV$-sheaf.
\end{thm}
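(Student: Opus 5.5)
The plan is to translate everything into the eigenvalues of $D$ relative to the principal polarization. The integrality of these eigenvalues turns the strict inequality coming from $2H-D$ into the needed weak inequality for $H-D$.

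\emph{Step 1: reduce to numerical conditions.} For a line bundle $L$ on an abelian variety, $L$ satisfies $\IT$ if and only if $L$ is ample. Indeed, if $L$ is ample then $H^i(L\otimes\alpha)=0$ for $i>0$ and all $\alpha\in\Pic^0(A)$. Conversely, if $L$ satisfies $\IT$ then $\chi(L)\neq 0$ (otherwise $H^0(L\otimes\alpha)=0$ for all $\alpha$, giving a zero Fourier--Mukai transform and hence $L=0$). So $L$ is nondegenerate, and by Mumford's index theorem it has index $0$, i.e. it is ample. Conversely, I will show that a nef line bundle is $\GV$. Write a nef $N$ as $N\simeq f^*M\otimes\alpha$ with $f\colon A\to B$ a quotient with connected kernel $K$, $M$ ample on $B$ and $\alpha\in\Pic^0(A)$. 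A Künneth/Leray computation then shows that $H^i(N\otimes\beta)\neq0$ only for $\beta$ in a translate of $\hat B\subset\hat A$, which has codimension $\dim K$, and only for $i\le \dim K$. Hence $\codim V^i(N)\ge i$. So the theorem reduces to the following claim: if $D$ is nef and $2H-D$ is ample, then $H-D$ is nef.

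\emph{Step 2: eigenvalues.} Via the principal polarization, consider the endomorphism $\psi=\phi_H^{-1}\phi_D\in\mathrm{End}(A)$. In terms of the Hermitian forms $h_H$ (positive definite) and $h_D$ on the tangent space $V$, $\psi$ has analytic representation the $h_H$-self-adjoint operator whose eigenvalues $\lambda_1,\dots,\lambda_g$ are the eigenvalues of $h_D$ relative to $h_H$. Nefness of $D$ gives $\lambda_i\ge0$. Ampleness of $2H-D$ gives $2-\lambda_i>0$. Nefness of $H-D$ is equivalent to $1-\lambda_i\ge0$. The rational representation of $\psi$ on $H_1(A,\Z)$ has characteristic polynomial with integer coefficients, and over $\C$ it equals $\rho_a\oplus\overline{\rho_a}$. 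Its roots are therefore exactly the real numbers $\lambda_i$, each with multiplicity two. Consequently every $\lambda_i$ is an algebraic integer all of whose Galois conjugates lie among the $\lambda_j$, hence in $[0,2)$.

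\emph{Step 3: the arithmetic step (main point).} By Kronecker's theorem, a totally real algebraic integer with all conjugates in $[-2,2]$ has the form $2\cos(2\pi k/n)$ with $\gcd(k,n)=1$. Its conjugates are exactly $2\cos(2\pi j/n)$ for $\gcd(j,n)=1$. We need all of these in $[0,2)$. The cases $n=1,2,3$ give $2,-2,-1$, which are excluded. The case $n=4$ gives $0$ and $n=6$ gives $1$, both allowed. For $n=5$ and $n\ge7$ there is a $j$ coprime to $n$ with $j/n\in(1/4,3/4)$, producing a negative conjugate. For example, $n=5$ with $j=2$ works; in general I would take $j$ to be a unit close to $n/2$, and this small elementary check is the only fiddly part. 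Hence every $\lambda_i\in\{0,1\}$. Therefore $1-\lambda_i\ge0$, so $H-D$ is nef, and by Step 1 $\cO_A(H-D)$ is a $\GV$-sheaf.

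The step I expect to be the real obstacle is recognizing that the implication fails over $\mathbb{R}$: a real eigenvalue such as $1.5$ is compatible with both hypotheses. The proof must therefore exploit integrality via the characteristic polynomial of $\psi$ together with Kronecker's theorem; the remaining steps are standard facts about line bundles on abelian varieties.
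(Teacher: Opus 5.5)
Your proof is correct. Its skeleton matches the paper's: identify the eigenvalues $\lambda_i$ of $\phi_H^{-1}\phi_D$ with the roots of $P(t)=(tH-D)^g/g!$, use integrality, reality and the constraint $\lambda_i\in[0,2)$ to force $\lambda_i\in\{0,1\}$, then deduce that $H-D$ is nef. The differences are in the tools used at each stage.

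\textbf{Integrality.} The paper gets $P(t)^2=Q_f(t)\in\Z[t]$ from Mumford's degree polynomial and Riemann--Roch, then $P(t)\in\Z[t]$ by Gauss's lemma. You read the same fact off $\rho_r=\rho_a\oplus\overline{\rho_a}$; the two are equivalent.

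\textbf{Arithmetic step.} The paper avoids Kronecker's theorem. It writes $P(t)=t^a(t-1)^bR(t)$ with $R(0)R(1)\neq 0$, so every root of $R$ lies in $(0,2)\setminus\{1\}$. Then $0<|R(1)|=\prod|1-\rho_i|<1$, which contradicts $R(1)\in\Z$. The same norm argument applied to the minimal polynomial of a single $\lambda_i$ would replace your Kronecker-plus-cyclotomic case analysis. Your case check is correct: take $j=(n-1)/2$ for $n$ odd, $j=n/2-1$ for $4\mid n\ge 8$, and $j=n/2-2$ for $n\equiv 2\pmod 4$, $n\ge 10$. It is just heavier than needed.

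\textbf{Final step.} You use directly that nefness is equivalent to positive semidefiniteness of the Hermitian form, so $1-\lambda_i\ge 0$ finishes at once. The paper stays within Mumford's index theorem. It applies the theorem to the nondegenerate bundles $M_n=(n+1)H-nD$, whose polynomial $(t+n+1)^r(t+1)^{g-r}$ has only negative roots, to get $h^0(M_n)=(n+1)^r>0$. Hence $M_n$ is effective and nef, and letting $n\to\infty$ in $M_n/n=H-D+H/n$ gives nefness of $H-D$. This detour is needed only because $P_{H-D}(t)=(t+1)^rt^{g-r}$ may have the root $0$, so the index theorem cannot be applied to $H-D$ itself. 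Your route is more direct, since you are already working with Hermitian forms throughout Step 2.

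\textbf{Step 1.} You prove $\IT\Leftrightarrow$ ample and nef $\Rightarrow\GV$ by hand, where the paper cites Debarre and Hacon. Your sketches of both are sound.
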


For coherent sheaves on abelian varieties, M-regularity (or $\IT$) can be viewed as an analog of ampleness and GV property can be viewed as an analog of nefness. Thus the intuitive idea behind Theorem \ref{2-1>0} is simple. In arithmetic, given an integer $n$, if $2\cdot1-n>0$, then $1-n\ge0$. We can view the principal polarization $H$ on $A$ as the identity $1$ in arithmetic. However, this is oversimplified. For example, the identity is unique in arithmetic while an abelian variety can have different principal polarizations. Actually, the proof of Theorem \ref{2-1>0} is a tricky application of several Mumford's deep and classic results regarding line bundles on abelian varieties. Just as the above arithmetic inequality fails if $n$ is a rational number, Theorem \ref{2-1>0} fails if $D$ is a $\Q$-divisor. It also fails if $D$ is arbitrary. Consider an abelian variety $A$ with two principal polarizations $H$ and $H'$ such that they are not numerically equivalent to each other. Let $D=2H-H'$. Then $\cO_A(2H-D)=\cO_A(H')$ satisfies $\IT$ but $\cO_A(H-D)=\cO_A(-H+H')$ is not a $\GV$-sheaf. If $\cO_A(-H+H')$ were a $\GV$-sheaf, then $H'-H$ is nef by \cite{PP11b}*{Theorem 4.1}. This nefness implies that $H$ and $H'$ are numerically equivalent by calculating the top self-intersection number of $H'$ and using that $H$ and $H'$ are both principal polarizations, which is a contradiction.

\begin{ac}
    {I would like to thank Nathan Chen and Mihnea Popa for helpful discussions and comments. I am partially supported by the Simons Collaboration Grant on Moduli of Varieties.}
\end{ac}

\section{Preliminaries}

We work over the field of complex numbers $\C$. We recall several definitions first.

\begin{defi}\label{csl}
Let $\mathcal{F}$ be a coherent sheaf on an abelian variety $A$. The \emph{cohomological support loci} $V_l^i(A, \mathcal{F})$ for $i\in\N$ and $l\in\N$ are defined by
$$V_l^i(A, \mathcal{F})=\{\alpha\in\Pic^0(A)\mid\dim H^i(A, \mathcal{F}\otimes\alpha)\geq l\}.$$
We use $V^i(A, \mathcal{F})$ to denote $V_1^i(A, \mathcal{F})$.
\end{defi}

\begin{defi}\label{GVM}
A coherent sheaf $\mathcal{F}$ on an abelian variety $A$
\begin{enumerate}
	\item[(1)] is a GV-\emph{sheaf} if $\codim_{\Pic^0 (A)} V^i (A, \mathcal{F}) \ge i$ for every $i>0$.
	\item[(2)] is \emph{M-regular} if $\codim_{\Pic^0 (A)} V^i (A, \mathcal{F}) > i$ for every $i>0$.
	\item[(3)] \emph{satisfies} $\IT$ if $V^i (A, \mathcal{F})=\emptyset$ for every $i>0$.
\end{enumerate}
\end{defi}

It is known that M-regular sheaves are ample by \cite{Deb06}*{Corollary 3.2}, and GV-sheaves are nef by \cite{PP11b}*{Theorem 4.1}. A line bundle on $A$ is a $\GV$-sheaf if and only if it is nef by \cite{Hac04}*{Theorem 1.2} and \cite{PP11b}*{Theorem 4.1}. A line bundle on $A$ satisfies $\IT$ if and only if it is M-regular if and only if it is ample by \cite{Deb06}*{Corollary 3.2}.

\section{Proofs of main results}

\begin{proof}[Proof of Theorem \ref{2-1>0}]
Let $g=\dim A$. If $g=0$, then the conclusion is trivial. Thus we assume that $g>0$ in the following. Define
\[
 P(t):=\frac{(tH-D)^g}{g!}\in\Q[t].
\]
Since $H$ is a principal polarization, $H^g=g!$ and thus $P(t)$ is monic. We first show that $P(t)\in\Z[t]$. For an arbitrary divisor $E$, we define a homomorphism
\begin{align*}
    \phi_E\colon A&\to\hat A \\
 a&\mapsto T_a^*\cO_A(E)\otimes_{\cO_A}\cO_A(-E),
\end{align*}
 where $\hat A$ is the dual abelian variety of $A$, and $T_a$ is the translation morphism given by $a$. The homomorphism $\phi_H$ is an isomorphism since $H$ is a principal polarization, and hence
$f:=\phi_H^{-1}\circ\phi_D$ is an endomorphism of $A$. By \cite{Mum08}*{Theorem 4, page 180}, there exists a monic polynomial $Q_f(t)\in\Z[t]$ of degree $2g$ such that $Q_f(n)=\deg(n_A-f)$ for every integer $n$, where $n_A$ is multiplication by $n$ on $A$. We have that
\begin{align*}
    \phi_H\circ(n_A-f)=\phi_H\circ n_A-\phi_H\circ f=\phi_{nH}-\phi_D=\phi_{nH-D}.
\end{align*}
By \cite{Mum08}*{The Riemann--Roch Theorem, Page 150}, we have that 
\begin{align*}
    Q_f(n)=\deg(n_A-f)=\deg\phi_{nH-D}
       =\chi(A, \cO_A(nH-D))^2
       =P(n)^2.
\end{align*}
Thus $P(t)^2=Q_f(t)\in\Z[t]$. By Gauss's lemma, $P(t)\in\Z[t]$.

By \cite{Mum70}*{Theorem 2, Page 98}, all the roots of $P(t)$ are real numbers. Since $\cO_A(2H-D)$ satisfies $\IT$, $2H-D$ is ample by \cite{Deb06}*{Corollary 3.2}. Moreover, $H$ is ample and $D$ is nef by the assumptions. Thus all the roots of $P(t)$ lie in $[0,2)$. Decompose $P(t)=t^a(t-1)^bR(t)$, where $R(t)\in\Z[t]$ is monic and $R(0)R(1)\ne0$. If $R(t)$ had positive degree, its roots $\rho_1,\ldots,\rho_s$ would lie in $(0,2)$ and not be $1$. Therefore
\[
 0<|R(1)|=\prod_{i=1}^s|1-\rho_i|<1,
\]
which is a contradiction to $R(1)\in\Z$. Hence, for some integer $r$ with $0\le r\le g$,
\[
 P(t)=t^r(t-1)^{g-r}.
\]

For each integer $n\ge1$, let $M_n:=(n+1)H-nD$. Define
\[
 P_{M_n}(t):=\frac{(tH+M_n)^g}{g!}=n^gP\left(\frac{t+n+1}{n}\right)=(t+n+1)^{r}(t+1)^{g-r}.
\]
All the roots of $P_{M_n}(t)$ are negative, thus $H^i(A, \cO_A(M_n))=0$ for $i>0$ by \cite{Mum70}*{Theorem 2, Page 98}. Thus by \cite{Mum08}*{The Riemann--Roch Theorem, Page 150}, we have that
\[
 h^0(A,\cO_A(M_n))=\chi(A,\cO_A(M_n))=P_{M_n}(0)=(n+1)^r>0.
\]
Therefore $M_n$ is linearly equivalent to an effective divisor and thus is nef. Since
\[
 \frac{M_n}{n}=H-D+\frac{H}{n},
\]
let $n$ go to positive infinity, and we deduce that $H-D$ is nef. By \cite{Hac04}*{Theorem 1.2}, $\cO_A(H-D)$ is a $\GV$-sheaf.
\end{proof}

Note that in the above proof, if we consider directly $H-D$ and
\[
 P_{H-D}(t):=\frac{(tH+H-D)^g}{g!}=P(t+1)=(t+1)^{r}t^{g-r},
\]
we cannot get the desired conclusion. This is because $P_{H-D}(t)$ can have nonnegative roots and thus we cannot get $H^i(A, \cO_A(H-D))=0$ for $i>0$ by \cite{Mum70}*{Theorem 2, Page 98}. Indeed, these higher cohomology groups need not vanish. Next, we prove Theorem \ref{Generic vanishing conjecture in codim 1}.

\begin{proof}[Proof of Theorem \ref{Generic vanishing conjecture in codim 1}]
Statement $(2)$ implies $(3)$ since GV-sheaves are nef by \cite{PP11b}*{Theorem 4.1}. Since $D$ is an effective divisor on an abelian variety, $D$ is nef. Thus statement $(3)$ implies $(2)$ by Theorem \ref{2-1>0}. Statement $(2)$ is equivalent to $(4)$ by \cite{PP08}*{Lemma 3.3}. It is straightforward that $(1)$ implies $(2)$. The only remaining part is that $(2)$ implies $(1)$. Since $(A, H)$ is an indecomposable principally polarized abelian variety, any translate of $H$ is reduced and irreducible. Since $\cO_A(H-D)$ is a nonzero $\GV$-sheaf, $V^0(A, \cO_A(H-D))$ is nonempty by \cite{HPS18}*{Lemma 7.4}. Thus some translate $T$ of $H$ satisfies $T\sim D+E$ where $E$ is an effective divisor. Since $D>0$, and $T$ is a principal polarization and is reduced and irreducible, it follows that $T=D+E$ and thus $E=0$ and $T=D$.
\end{proof}

Next, we give an application of Theorem \ref{2-1>0}.

\begin{coro}
Let $A$ be an abelian variety with principal polarizations $H$ and $H'$ such that $2H-H'$ is nef. Then $H'$ is a translate of $H$.
\end{coro}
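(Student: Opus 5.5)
We apply Theorem \ref{2-1>0} with $D:=2H-H'$; by hypothesis this divisor is nef. Then $2H-D=H'$ is a principal polarization, so it is ample and $\cO_A(H')$ satisfies $\IT$ by \cite{Deb06}*{Corollary 3.2}. Theorem \ref{2-1>0} therefore shows that $\cO_A(H-D)=\cO_A(H'-H)$ is a $\GV$-sheaf. By \cite{PP11b}*{Theorem 4.1}, equivalently \cite{Hac04}*{Theorem 1.2}, the divisor $N:=H'-H$ is nef.

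Next we show that $N$ is numerically trivial. Write $H'=H+N$ with both $H$ and $N$ nef. Expanding the top self-intersection gives
\[
g!=H'^g=\sum_{k=0}^{g}\binom{g}{k}H^{g-k}N^k\ \ge\ H^g=g!,
\]
and every term is nonnegative because $H$ and $N$ are nef. Hence $H^{g-1}N=0$. Since $H$ is ample and $N$ is nef, the Hodge index theorem then forces $N\equiv 0$. Concretely, $H^{g-2}N^2\ge 0$ by nefness, while Hodge index gives $H^{g-2}N^2\le 0$ whenever $H^{g-1}N=0$, with equality only when $N\equiv0$. Alternatively, on an abelian variety a nef class is represented by a positive semidefinite hermitian form, and $H^{g-1}N$ is a positive multiple of its trace with respect to $H$; a vanishing trace forces the form to be zero. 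Thus $H'$ is algebraically, hence numerically, equivalent to $H$.

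Finally, on an abelian variety, numerically equivalent line bundles differ by an element of $\Pic^0(A)$. So $\cO_A(H')\cong\cO_A(H)\otimes\alpha$ for some $\alpha\in\Pic^0(A)$. Because $\phi_H$ is surjective, $\alpha=T_a^*\cO_A(H)\otimes\cO_A(-H)$ for some $a\in A$, which gives $\cO_A(H')\cong T_a^*\cO_A(H)$. Since $h^0(A,\cO_A(T_a^*H))=1$, the effective divisor $H'$ must equal the translate $T_a^*H$. The step needing the most care is the numerical-triviality argument. The key point is that both $H$ and $N$ are nef, so no cancellation can occur among the terms of the expansion; this is exactly what fails in the counterexample $D=2H-H'$ discussed in the introduction, where $2H-H'$ is not nef.
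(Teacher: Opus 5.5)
Your proof is correct. Its first step, applying Theorem \ref{2-1>0} to $D=2H-H'$, is the same as the paper's. The two arguments diverge after that.

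The paper keeps working with the $\GV$ property. By \cite{HPS18}*{Lemma 7.4}, a nonzero $\GV$-sheaf has $V^0\neq\emptyset$, so $H'\sim T+E$ with $T$ a translate of $H$ and $E$ \emph{effective}. Then $g!=H'^g=(T+E)^g$ forces $T^{g-1}\cdot E=0$. An effective divisor of degree zero against an ample divisor vanishes, so $E=0$, $H'\sim T$, and $h^0=1$ gives $H'=T$.

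You instead keep only the nefness of $N=H'-H$; note that the proof of Theorem \ref{2-1>0} establishes this nefness directly, before \cite{Hac04} is invoked. Because $N$ is merely nef and not effective, $H^{g-1}\cdot N=0$ does not immediately kill it. You therefore need three further inputs:
\begin{itemize}
\item the Hodge index theorem, or the positive semidefinite Hermitian form description of nef classes, to get $N\equiv 0$;
\item the fact that numerically trivial line bundles on an abelian variety lie in $\Pic^0(A)$;
\item surjectivity of $\phi_H$, together with $h^0=1$, to finish.
\end{itemize}

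The paper's route trades these classical facts for the nonvanishing lemma on $\GV$-sheaves. Effectivity then makes its final numerical step immediate. Your route uses no $\GV$-sheaf theory beyond Theorem \ref{2-1>0} itself, and is more self-contained in classical terms.

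Two small remarks. First, the Hodge index equality case needs $g\ge 2$. For $g=1$ the expansion already gives $\deg N=0$, hence $N\equiv 0$, and your Hermitian-form argument covers all $g$ anyway. Second, the implication you use ($\GV$ $\Rightarrow$ nef) is the one the paper attributes to \cite{PP11b}*{Theorem 4.1}. The paper cites \cite{Hac04}*{Theorem 1.2} for the converse direction, so "equivalently" is not the right word there.
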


If we assume that $2H-H'$ is ample in the above statement, then it is not so difficult to prove it. By an inequality of Hodge type and a criterion for nef divisors on abelian varieties, we can deduce that $H'$ is a translate of $H$.

\begin{proof}
    Since $2H-H'$ is nef and $\cO_A(2H-(2H-H'))$ satisfies $\IT$, $\cO_A(H-(2H-H'))$ is a $\GV$-sheaf by Theorem \ref{2-1>0}. Thus $V^0(A,\cO_A(H'-H))$ is nonempty by \cite{HPS18}*{Lemma 7.4}. Thus some translate $T$ of $H$ satisfies $H'\sim T+E$ where $E$ is an effective divisor. Since both $H'$ and $T$ are principal polarizations and $E$ is effective and nef, we conclude that $E=0$ and $H'=T$.
\end{proof}

\bibliographystyle{amsalpha}
\bibliography{biblio}
\end{document}